\DeclareMathOperator{\Spec}{Spec}
\DeclareMathOperator{\prim}{prim}
\newcommand{\mbb}[1]{\mathbb{#1}}
\newcommand{\uHom}{\underline{\mathrm{Hom}}}
\newcommand{\Z}{\mbb{Z}}
\newcommand{\Q}{\mathbb{Q}}
\newcommand{\F}{\mathbb{F}}
\newcommand{\Preston}[1]{}
\numberwithin{equation}{section}
\newtheorem{theorem}{Theorem}[section]
\newtheorem{lemma}[theorem]{Lemma}
\theoremstyle{definition}
\newtheorem{remark}[theorem]{Remark} 
\newtheorem{example}[theorem]{Example}
\title{Primitive elements for $p$-divisible groups}
\author{Robert Kottwitz}
\address{Dept. of Mathematics \\
University of Chicago \\
5734 S. University Avenue \\
Chicago, Illinois 60637}
\email{kottwitz@math.uchicago.edu}
\author{Preston Wake}
\address{UCLA Mathematics Department \\
Box 951555 \\
Los Angeles, CA 90095-1555 }
\email{wake@math.ucla.edu}
\urladdr{math.ucla.edu/~wake/} 
\begin{document} 

\begin{abstract}
We introduce the notion of primitive elements in arbitrary truncated $p$-divisible groups. By design, the scheme of primitive elements is finite and locally free over the base. Primitive elements generalize the ``points of exact order $N$,'' developed by Drinfeld  and Katz-Mazur  for elliptic curves.
\end{abstract}

\maketitle

\section{Introduction}

In this paper, we observe that Raynaud's theory of Haar measures on finite flat group schemes \cite{R} may be used to define a ``non-triviality" condition on sections, which we call \emph{non-nullity}. For groups of order $p$, we show that non-null sections are ``generators" in the sense of Oort-Tate theory \cite{OT}. For truncated $p$-divisible groups, we use a non-nullity condition to define the notion of \emph{primitivity}, 
generalizing the ``points of exact order $N$'' of Drinfeld \cite{D} and Katz-Mazur \cite{KM}.

In the case of elliptic curves, Drinfeld and Katz-Mazur go further and define full level structures. This allows them to construct and prove nice properties of integral models of modular curves at arbitrary levels in a very elegant fashion. 
We believe that our definition of primitive elements may be a first step toward defining full level structures 
in certain cases, as it was in previous work by one of us in the case $\mu_p \times \mu_p$ \cite{W}. However, for general $p$-divisible groups, we believe that new ideas are needed, and we hope that this work will lead to a better understanding of the issues involved in defining full level structures.

\subsection{The problem of full level structures} To understand the problem of finding level structures, consider the following setup. Let $S$ be a Noetherian scheme that is flat over $\Z_{(p)}$, and let $G$ be a finite flat group scheme such that $G[1/p]:=G \times_S S[1/p]$ is \'etale-locally isomorphic to $(\Z/p^r\Z)^g$ (for instance, $S$ could be a  Shimura variety classifying $g/2$-dimensional abelian varieties with additional structure, and $G$ could be the $p^r$-torsion of the universal abelian variety). A level structure on $G$ is a map $(\Z/p^r\Z)^g \to G$ that is like an isomorphism. The desired properties of level structures are best described scheme-theoretically. The set of full level structures $\mathcal{F}_G$ should be a closed subscheme of $\uHom_S \bigl( (\Z/p^r\Z)^g,G \bigr)$ satisfying:
\begin{list}{$\bullet$}{}
\item  $\mathcal{F}_G$ is flat over $S$
\item $\mathcal{F}_G \times_S S[1/p] = \underline{\mathrm{Isom}}_{S[1/p]}\bigl( (\Z/p^r\Z)^g,G[1/p] \bigr)$.
\end{list}
Since $\uHom_S \bigl( (\Z/p^r\Z)^g,G \bigr)$ is flat over $\Z_{(p)}$, these conditions determine $\mathcal{F}_G$ uniquely. However, in practice, it may be difficult to tell if a given homomorphism is full. For many purposes, $\mathcal{F}_G$ is only useful if there is an explicit description of the ideal defining it.

\subsection{Previous results} In the case where $G$ embeds into a smooth curve $C$ over $S$ (for example if $G=E[p^r]$ for an elliptic curve $E$), a satisfactory theory of full level structures has been built out of the ideas of Drinfeld \cite{D}. However, Drinfeld's definition crucially uses the fact that $G$ is a Cartier divisor in $C$. Katz and Mazur  developed a notion of ``full set of sections,'' which they show is equivalent to the Drinfeld level structure in the case that $G \subset C$ 
\cite[\S1.10]{KM}. However, as Chai and Norman pointed out \cite[Appendix]{CN}, the Katz-Mazur definition does not give a flat space in general -- it fails even for the relatively simple example of $G=\mu_p \times \mu_p$. 

More recently, one of the present authors developed a notion of full homomorphisms in the specific case $G=\mu_p \times \mu_p$ \cite{W}. 

\subsection{Primitive elements} The first step in finding a basis for a free module is to find a primitive vector -- that is, an  element that can be extended to a basis. Analogously, a
first step towards defining a notion of full level structure might be
to define a notion of primitive element for group schemes. In addition, the notion of primitive element is needed to define the correct notion of ``linear independence,'' which is a key part of the method in \cite{W} for $G=\mu_p \times \mu_p$. In this paper we develop a formal theory of primitive elements, generalizing the ad hoc notion defined in \cite{W}.

\subsection{Primitive elements and full homomorphisms}
One may suggest defining a homomorphism $\varphi:(\Z/p^r\Z)^g \to G$ to be ``full" if it sends primitive vectors to primitive vectors. Indeed, if $G$ is constant, then this corresponds to the condition that the matrix of $\varphi$ has linearly independent columns. However, the example of $\mu_p \times \mu_p$ studied in \cite{W} shows why this definition does not give a flat space of full homomorphisms. In that case, one may think of $\varphi$ as a ``$2\times 2$-matrix with coefficients in $\mu_p$.'' If $\varphi$ sends primitive vectors to primitive vectors, then the columns are ``linearly independent,'' but the rows may not be -- hence the elements cutting out the condition that the rows be ``linearly independent" are $p$-torsion elements in the coordinate ring of the space of full homomorphisms. On the other hand, the main theorem of \cite{W} implies that column conditions together with the row conditions give a flat space. 

For a general group $G$, there is no obvious analog of the row conditions, so it is not clear how to generalize from primitive vectors to full homomorphisms. A new idea is needed. 


\subsection{Summary}  Let $S$ be a scheme, and let $G$ be a finite locally free (commutative) group scheme over $S$. Let $|G|$ denote the rank of $G$. We define a closed subscheme $G^\times \subset G$, which we call the \emph{non-null} subscheme. The ideal cutting out $G^\times$  consists of invariant measures, 
as in Raynaud's theory \cite{R}, on the Cartier dual of $G$. As a consequence of Raynaud's results, $G^\times$ is finite and locally free over $S$ of rank $|G|-1$. We think of $G^\times$ as the  group-scheme version of the set of non-zero elements of $G$. 

There does not seem to be any completely satisfactory word to use here. Since the identity element 
in $G(S)$ can perfectly well lie in $G^\times(S)$ (as happens in the second example below when $S$ is a scheme over 
$\mathbb F_p$), it would be extremely confusing say that elements in $G^\times(S)$ are non-zero. Instead we have chosen to 
say that they are ``non-null.''

As evidence that the notion of non-nullity is reasonable, 
 we mention 
the following examples:
\begin{list}{$\bullet$}{•}
\item If $G=\Gamma_S$, the constant group-scheme associated to a finite abelian group $\Gamma$, then $G^\times = (\Gamma \setminus \{0\})_S$, the scheme of non-identity sections.
\item If $G=\mu_p$, then $G^\times =\mu_p^\times$, the scheme of primitive roots of unity.
\item If $G$ is an Oort-Tate group \cite{OT} (i.e.~$|G|=p$), then $G^\times$ coincides with the \emph{scheme of generators} defined by Haines and Rapoport \cite{HR}.
\item If $G$ is a Raynaud group \cite{R} (i.e.~ $G$ has an action of $\mathbb{F}_q$ and $|G|=q$ for some power $q$ of $p$), then $G^\times$ coincides with the \emph{scheme of $\mathbb{F}_q$-generators} defined by Katz-Mazur (c.f.~ \cite{pappas}) 
\end{list}

We define primitive elements using non-nullity as follows. Assume that $G=\mathcal{G}[p^r]$, the $p^r$-torsion subgroup of a $p$-divisible group $\mathcal{G}$ of height $h$. Let $G^{\prim} = G \times_{\mathcal{G}[p]} (\mathcal{G}[p])^\times$, where the map $G \to \mathcal{G}[p]$ is given by multiplication by $p^{r-1}$. It follows that the subscheme $G^{\prim} \subset G$ is locally free over $S$ of rank $(p^h-1)p^{h(r-1)}$.

In specific examples, we can identify $G^{\prim}$:
\begin{list}{$\bullet$}{•}
\item If $V=(\Q_p/\Z_p)^h$ and $\mathcal{G}=V_S$ is a constant $p$-divisible group, then $G^{\prim}$ is the scheme associated to the set of primitive vectors in the free $\Z/p^r\Z$-module $V[p^r]$.
\item If $G=\mu_{p^r}$, then $G^{\prim}$ is the subscheme of primitive roots of unity.
\item If $G=E[p^r]$ for an elliptic curve $E$, then $G^{\prim}$ is the scheme of sections ``of exact order $p^r$'' defined by Drinfeld-Katz-Mazur \cite{D, KM}.
\end{list}

This justifies the notation $G^{\prim}$ -- it is meant to evoke both the notion of primitive vector in a free module, and primitive root of unity.

\subsection{Applications to Shimura varieties}  Let $X$ be a Shimura variety over $\Q$ that has a universal abelian variety $A$ over it, and suppose $\mathfrak{X}$ and $\mathcal{A}$ are models for $X$ and $A$ that are flat over $\Z_{(p)}$. Then, for each $r>1$, there is an interesting cover $X_1(p^r)$ of $X$ given by adding the additional data of a point of order $p^r$ in $A$.

The scheme $\mathfrak{X}_1(p^r):=\mathcal{A}[p^r]^{\prim}$ is an integral model for $X_1(p^r)$ that is finite and flat over $\mathfrak{X}$. Since $\mathfrak{X}$ is flat over $\Z_{(p)}$, this implies that $\mathfrak{X}_1(p^r)$ is the Zariski-closure of $X_1(p)$ in $\uHom_\mathfrak{X}(\Z/p^r\Z,\mathcal{A}[p^r])$. In particular, this ``flat-closure" model, which is a priori only flat over $\Z_{(p)}$, is actually flat over $\mathfrak{X}$.

On the other hand, one can show that, except for modular curves (or the Drinfeld case), the scheme $\mathfrak{X}_1(p^r)$ is not normal. In particular, $\mathfrak{X}_1(p^r)$ is not the normalization of $\mathfrak{X}$ in $X_1(p^r)$, and this gives an example where the ``normalization" and ``flat closure" models differ. 

This issue of non-normality makes us doubtful that these models will have direct application to the Langlands program. Instead, we view the theory of primitive elements as an interesting tool to use in the future study of integral models. For example, it would be interesting to consider combining the notion of primitive element with parahoric models of Shimura varieties, in analogy with the work of Pappas on Hilbert modular varieties \cite{pappas}. Using the theory of Raynaud group schemes, Pappas produces a model for $\Gamma_1(p)$-type level that is normal (but not finite over the base). \Preston{Added some remarks about normal models using Raynaud's theory}

\subsection{Acknowledgements} 
We thank G.~Boxer, B.~Levin and K.~Madapusi Pera for interesting conversations about integral models. We are grateful to T.~Haines, G.~Pappas, and M.~Rapoport for helpful comments on a preliminary version of this paper. We thank the referees for comments and suggestions.

\section{Review of Raynaud's Haar measures for finite flat group schemes} 

In this section we work over an affine base scheme $S = \Spec(k)$, and 
$G$  denotes a commutative group scheme over $S$ that is  finite, flat and
finitely presented. So $G = \Spec(A)$ with $A$ locally free of finite rank as $k$-module. 
This rank is a locally constant function, denoted $|G|$, on $S$.

We write $G'=\Spec(A')$ for the Cartier dual of $G$; it is another object of the same kind 
as $G$, and $|G'| = |G|$. Recall that $A$ and $A'$ are the $k$-duals of 
each other.

As Raynaud \cite{R} points out,  it is helpful to think about $f \in A$ as a function on $G$ and 
$\mu \in A'$ as a measure on $G$, and then to write $\langle \mu, f \rangle \in k$ 
for the natural pairing of $\mu$ with $f$. Closely following Raynaud's notation and conventions, we 
\begin{itemize}
\item
write $\star$ for the 
multiplication law on $A'$ (intuitively, convolution of measures), 
\item 
write $1$ for the unit element in the ring $A$ (intuitively, the constant function with value $1$), 
\item 
write  $\delta$ for the unit element in the ring $A'$, i.e.~the counit $A \to k$ for 
the coalgebra $A$ (intuitively, evaluation of functions at the identity element in 
the group $G$), 
\item 
 denote the natural $A$-module structure on $A'$ by $f \mu$ (intuitively, pointwise multiplication of 
a measure by a function), and 
\item 
 denote the natural 
$A'$-module structure on $A$ by $\mu \star f$ (intuitively, the convolution of a function by a measure). 
\end{itemize}
By definition these actions are given by  the  formulas 
\[
\langle f \mu, g \rangle = \langle \mu, fg \rangle,  \qquad  \langle \nu, \mu \star f \rangle =
 \langle \mu \star \nu, f \rangle.
\]

A $G$-module  is by definition a comodule for the coalgebra $A$, but,   
because $A$ is locally free of finite rank as $k$-module, giving a $G$-module is the same as giving an 
 $A'$-module $M$. For example, the $A'$-module structure on $A$ reviewed above is the one corresponding 
to the natural $G$-module structure on $A$. 

Given a $G$-module $M$, 
its submodule $M^G$ of $G$-invariants  consists of all elements in $M$ annihilated by the augmentation 
ideal $I'$ in $A'$.  For any $k$-algebra $R$ there is a natural map 
\begin{equation} \label{eq.InvBC}
(M^G) \otimes R \to (M \otimes R)^{G \otimes R}.
\end{equation}
(We are abbreviating $\otimes_k$ to $\otimes$.)
When \eqref{eq.InvBC} is an isomorphism for every $k$-algebra, one says that forming $G$-invariants in $M$ 
commutes with extension of scalars. Bear in mind that  $M$ need not have this property, even when it 
is locally free of finite rank as $k$-module. 

For the $G$-module $A$ one has $A^G = k$. So, in this example, it is evident 
that forming invariants does 
commute with extension of scalars.  
Now $A'$ is of course an $A'$-module, i.e.~a $G$-module, so we can form its submodule 
of $G$-invariants
\[
D_G := (A')^G = \{ \mu \in A' : \nu \star \mu =0 \quad \forall\, \nu \in I'  \}.
\] 
We will refer to elements of $D_G$ as \emph{$G$-invariant measures} on $G$. 
From the decomposition $A'=k \oplus I'$ it follows immediately that $D_G$ can also be described as  
\[
 \{ \mu \in A' : \nu \star \mu = \langle \nu,1  \rangle \mu \quad \forall\, \nu \in A'  \}.
\]

Raynaud proves (in the discussion on page 277 of \cite{R}) that $(A')^G$ is a direct summand of $A'$, locally 
free of rank $1$ as $k$-module. In other words, $G$-invariant measures on $G$ form a line bundle over 
$S$. When $D_G$ is free of rank $1$ (not just locally so), a basis element $\mu$ for it is called a 
\emph{Haar measure on $G$}. A $G$-invariant measure $\mu:A \to k$ is a Haar measure if and only 
if it is surjective. 

The line bundle of $G'$-invariant measures on $G'$ is then 
\begin{equation}
\label{eq: defn of J_G}
J_G := A^{G'} = \{ f \in A : gf=\langle \delta, g \rangle f \quad \forall\, g \in A \}, 
\end{equation}
a direct summand of $A$ that is locally free of rank $1$ as a $k$-module. Note that since $A=I\oplus k$, where $I=\ker(\delta)$ is the augmentation ideal, \eqref{eq: defn of J_G} implies that $J_G$ is the annihilator of $I$ in $A$.  Raynaud also proves that 
\begin{enumerate}
\item
The natural pairing $A' \otimes A \to k$ restricts to a perfect pairing between $D_G$ and $J_G$. So 
the line bundles $D_G$ and $J_G$ on $S$ are canonically dual to each other. 
\item The map $f \otimes \mu \mapsto f\mu$ is an isomorphism  $A \otimes D_G \to A'$. This map 
is an isomorphism of $A$-modules and of $A'$-modules, with $g \in A$ (respectively, $\nu \in A'$) 
operating on $A \otimes D_G$ by the rule $g(f \otimes \mu):= (gf) \otimes \mu$ (respectively, 
$\nu \star(f \otimes \mu) := (\nu \star f) \otimes \mu$). 
\end{enumerate} 

It follows from (2) that, Zariski locally on $S$, the $G$-module $A'$ is isomorphic to the $G$-module $A$. 
So forming $G$-invariants in $A'$ (i.e. forming $D_G$) commutes with extension of scalars. (This 
useful fact is brought out  by Moret-Bailly in the section of \cite{MB} in which he summarizes Raynaud's work.)

\subsection{Integration in stages}
\label{subsec: integration}
We need one more fact about Haar measures, namely an analog of the ``integration in stages formula'' 
in the theory of Haar measures on locally compact groups. It seems plausible that this is well-known, but 
since we do not know a reference, we will provide a proof. 

Consider a short exact sequence 
\[
0 \to H \to G \to K \to 0. 
\]
Here $H = \Spec(B)$, $K=\Spec(C)$ are objects of the same type as $G$, so $B$ and $C$ are locally  
free $k$-modules.  Moreover $A$ is faithfully flat 
over its subalgebra $C$, and $B$ is the quotient of $A$ by the ideal generated by the augmentation 
ideal $I_C$ in $C$.  Our goal is to understand invariant measures on $G$ in terms of 
invariant measures on $H$ and $K$. Dual to $C \subset A$ and $A \twoheadrightarrow B$ are 
the algebra homomorphisms 
\[
A' \twoheadrightarrow C' \qquad B' \subset A'. 
\]
Observe that the kernel of $A' \twoheadrightarrow C'$ is the ideal in $A'$ generated 
by the augmentation ideal $I_{B'} = (I_B)'$ in $B'$.

\begin{lemma} \label{lem.IntStages}
Let $\mu_H \in D_H$ and  $\mu_K \in D_K$. Choose  $\tilde{\mu}_K  \in A'$ mapping 
to $\mu_K$ under $A' \twoheadrightarrow C'$. 
Then the element $\mu_G \in A'$ defined by $\mu_G = \tilde{\mu}_K \star \mu_H$ lies in $D_G$ 
and is independent of the choice of $\tilde{\mu}_K$. 
Moreover the map $\mu_K \otimes \mu_H \mapsto \mu_G$ is an isomorphism 
$D_K \otimes D_H \to D_G$. 
 \end{lemma}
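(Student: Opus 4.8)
The plan is to verify the three assertions---independence of the lift, membership $\mu_G\in D_G$, and the isomorphism property---in that order, the first two being short formal consequences of the description of $\ker(A'\twoheadrightarrow C')$ and the third being the real content. Throughout I would use two facts: convolution on $A'$ is commutative (as $G$ is commutative, $A$ is cocommutative, so $A'$ is a commutative ring), and $\mu_H\in D_H=(B')^H$ is annihilated by $I_{B'}$, i.e. $\beta\star\mu_H=0$ for all $\beta\in I_{B'}$, this holding whether the convolution is computed in $B'$ or, via $B'\subset A'$, in $A'$.

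First I would settle the two formal points. Independence of the lift is immediate: two lifts differ by an element of $\ker(A'\twoheadrightarrow C')$, which by the observation preceding the lemma is the ideal generated by $I_{B'}$; such an element has the form $\sum_i\gamma_i\star\beta_i$ with $\beta_i\in I_{B'}$, so convolving with $\mu_H$ and using associativity gives $\sum_i\gamma_i\star(\beta_i\star\mu_H)=0$. For $\mu_G\in D_G$ I would use the characterization $D_G=\{\mu:\nu\star\mu=0\ \forall\,\nu\in I_{A'}\}$. Given $\nu\in I_{A'}$, its image $\bar\nu\in C'$ lies in $I_{C'}$, so the image of $\nu\star\tilde\mu_K$ in $C'$ equals $\bar\nu\star\mu_K=0$ because $\mu_K\in D_K$; hence $\nu\star\tilde\mu_K\in\ker(A'\twoheadrightarrow C')=(I_{B'})$, and convolving with $\mu_H$ as above yields $\nu\star\mu_G=0$. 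Bilinearity is clear, so one obtains a well-defined $k$-linear map $\Phi:D_K\otimes D_H\to D_G$.

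It remains to show $\Phi$ is an isomorphism. Both sides are line bundles on $S$ (Raynaud), and the construction of $\mu_G$---lifting through the surjection $A'\twoheadrightarrow C'$ and convolving---commutes with base change, as does the formation of $D_G,D_H,D_K$; so I may reduce to $S=\Spec k$ with $k$ a field, where the claim becomes: if $\mu_H,\mu_K\neq 0$ then $\mu_G\neq 0$. To prove this non-degeneracy I would pass to Cartier duals. The dual of $0\to H\to G\to K\to 0$ is $0\to K'\to G'\to H'\to 0$, and applying the very construction above to this sequence produces a map $\Psi:D_{H'}\otimes D_{K'}\to D_{G'}$; since $J_X=D_{X'}$ for any $X$, this is a map $\Psi:J_H\otimes J_K\to J_G$, $j_H\otimes j_K\mapsto \tilde{j}_H\cdot j_K$ (the convolution for $G'$ being ordinary multiplication in $A$). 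The crux is an adjunction of the form $\langle\Phi(\mu_K\otimes\mu_H),\Psi(j_H\otimes j_K)\rangle=\langle\mu_H,j_H\rangle\,\langle\mu_K,j_K\rangle$ with respect to the pairing $A'\otimes A\to k$ (up to the canonical identifications), which would exhibit $\Phi$ and $\Psi$ as mutual transposes under the perfect pairings $D_G\times J_G\to k$, $D_H\times J_H\to k$, $D_K\times J_K\to k$; perfectness of these pairings then forces $\Phi$ to be an isomorphism, since on each fiber it is a nonzero map of one-dimensional spaces.

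I expect this adjunction---equivalently, the bare non-degeneracy $\mu_G\neq 0$ over a field---to be the main obstacle. The naive approach is to write $\mu_G(f)=\langle\mu_K,E(f)\rangle$, where $E=(\id\otimes\mu_H)\circ\Delta$ is the ``average over $H$'' operator, and then to invoke surjectivity of $E\colon A\to A^H=C$; but this fails in characteristic $p$, where $H$ (for instance $\mu_p$ or $\alpha_p$) is not linearly reductive and averaging against an invariant measure need not be surjective. The Cartier-duality route sidesteps this by making the two constructions visibly dual, so that non-degeneracy for $G$ follows from the perfectness of Raynaud's pairing rather than from any reductivity; carrying out the adjunction computation with the Sweedler coproduct, using that $\Delta$ is an algebra map and that $j_K$ is an integral in $C$, is the one genuinely laborious step.
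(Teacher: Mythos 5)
Your handling of the two formal points (independence of the lift, and $\mu_G\in D_G$) is correct and essentially the paper's: the paper likewise observes that the lift is well defined modulo $(I_{B'})A'$ and that $I_{B'}$ kills $\mu_H$. For the isomorphism, however, you reject precisely the route the paper takes, and your stated reason is mistaken. The paper proves that $\mathcal I_H\colon A\to A^H=C$, $f\mapsto \mu_H\star f$, \emph{is} surjective when $\mu_H$ is a Haar measure, and this has nothing to do with linear reductivity: the issue is not whether averaging is surjective for an arbitrary $H$-module (it is not, as you say), but whether it is surjective for the coordinate ring of an $H$-\emph{torsor}. Since $G\to K$ is an fppf $H$-torsor and surjectivity of a map of $C$-modules may be checked after a faithfully flat base change, one pulls back along $C\to A$ to trivialize the torsor, where $\mathcal I_H$ becomes $\mu_H\otimes\id$ and is surjective because a Haar measure $B\to k$ is surjective. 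Granting this, $\mu_G=\mu_K\circ\mathcal I_H$ is surjective whenever $\mu_H$ and $\mu_K$ are; a surjective invariant measure is a Haar measure, i.e.\ a local basis of the line bundle $D_G$, and the isomorphism follows with no dualization. Your examples do not give counterexamples: for $0\to\mu_p\to\mu_{p^2}\to\mu_p\to 0$ over $\mathbb{F}_p$ one computes that $\mathcal I_H$ is the projection of $k[x]/(x^{p^2}-1)$ onto the span of the $x^{pj}$, which is exactly $C$.

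Your alternative route --- reduce to a field using base-change compatibility of $D$, then prove the adjunction $\langle\Phi(\mu_K\otimes\mu_H),\Psi(j_H\otimes j_K)\rangle=\langle\mu_H,j_H\rangle\langle\mu_K,j_K\rangle$ against the dual construction for $0\to K'\to G'\to H'\to 0$ --- is viable, but as written its crux is only announced, so the proposal is incomplete exactly where the content lies. The adjunction is in fact true and not laborious: since $j_K\in C=A^H$, the $H$-coaction sends $j_K\mapsto j_K\otimes 1$, so $\mu_H\star(\tilde j_H j_K)=\bigl(\mu_H\star\tilde j_H\bigr)j_K=\mathcal I_H(\tilde j_H)\,j_K$; applying the counit of $C$ to $\mathcal I_H(f)$ gives $\langle\delta,\mu_H\star f\rangle=\langle\mu_H,f\rangle$, so $\mathcal I_H(\tilde j_H)\equiv\langle\mu_H,j_H\rangle\bmod I_C$; and $j_K$ annihilates $I_C$, whence $\mu_H\star(\tilde j_H j_K)=\langle\mu_H,j_H\rangle j_K$ and pairing with $\tilde\mu_K$ gives the claim. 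Note, though, that this computation rests on the same structural facts about $\mathcal I_H$ (that it lands in $C=A^H$ and reduces to $\mu_H$ modulo $I_C$) that underlie the paper's argument; what the dualization buys you is only the avoidance of the descent step for surjectivity, at the cost of invoking perfectness of Raynaud's pairing.
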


\begin{proof}
It is evident that $\mu_G$ is independent of the choice of the lifting $\tilde{\mu}_K$, because this 
lifting is well-defined modulo $(I_{B'}) A'$, and $I_{B'}$ annihilates $\mu_H$. The rest of the lemma 
is most easily  understood in terms of integration in stages, as we will now see. 

For any $H$-module $M$ the invariant measure $\mu_H$ gives rise to a $k$-linear map 
$M \to M^H$, defined by $m \mapsto \mu_H \star m$. (We use $\star$ to denote the operation 
of an element in $B'$ on an $H$-module.)  
Applying this to the $H$-module $A$, we obtain 
 a  $k$-linear map 
\[
\mathcal I_H : A \to A^H=C, 
\]
 given by convolution with $\mu_H$ (intuitively, integration over the orbits 
of $H$ on $G$). We claim that, if $\mu_H$ is a Haar measure, then $\mathcal I_H$ is surjective. 
Indeed, this is a special case of the following more general statement. Let $T=\Spec(D)$ be an affine $S$-scheme, and 
let $X=\Spec(E)$ be an $H$-torsor over $T$. 
      Then the map $\mathcal I_H : E \to D$ (given by convolution with the Haar measure $\mu_H$) 
is surjective. Surjectivity of $\mathcal I_H$ is fpqc local,
 so we are reduced to the case in which $X = H \times T$. Then $\mathcal I_H$ is 
obviously surjective, because it is obtained by tensoring $\mu_H : B \twoheadrightarrow k$ with $D$.

The map $\mathcal I_H : A \to C$ is equivariant with respect to $G \twoheadrightarrow K$ (and the natural actions of 
$G$ on $A$ and $K$ on $C$), and the composition  $\mu:=\mu_K \circ \mathcal I_H : A \to k$ is  $G$-equivariant, 
i.e.~$\mu \in D_G$.   Unwinding the definitions, one sees that $\mu = \mu_G$. 
The work we did shows that $\mu_G$ is surjective when both $\mu_H$, 
$\mu_K$ are surjective, and hence that 
 $\mu_K \otimes \mu_H \mapsto \mu_G$ is an isomorphism from 
$D_K \otimes D_H$ to $D_G$. 
\end{proof}

\section{Non-null elements in $G$}

In this section we continue with $k$ and $G$ as in the previous section. 

\subsection{Definition of non-nullity of elements in $G$}\label{sub.DefPrim} 
The explicit description \eqref{eq: defn of J_G} of $J_G$ shows that it is an ideal in $A$. We will refer 
to $J_G$ as the \emph{non-nullity ideal}. 
 We denote by $G^{\times} \hookrightarrow G$ the closed subscheme of $G$ cut out by the ideal $J_G$. Observe that  $G^{\times}=\Spec(A/J_G)$ 
is  locally free of rank $|G|-1$ over $S$. 

For every $k$-algebra $R$, $G^{\times}(R)$ is a subset of $G(R)$. 
We say that an element $g \in G(R)$ is \emph{non-null} when it lies in the subset $G^{\times}(R)$.  In the next subsections we will investigate this notion.

\subsection{Non-nullity in the constant case} 

Start with a finite abelian group $\Gamma$ and use it to build a constant group scheme $G/S$. Then $A$ is the algebra 
of $k$-valued functions $f:\Gamma \to k$, the ring structure being pointwise multiplication of functions. Then
 $I_G=\{ f \in A : f(e_\Gamma) =0 \}$ and $J_G=\{ f \in A : f(\gamma) =0 \quad \forall\, \gamma \ne e_\Gamma \}$ \Preston{changed $g$ to $\gamma$ in the definition of $J_G$ for clarity ($g$ was used as an element of $A$ before)}.  
So $A = J_G \oplus I_G$. In other words the scheme $G$ decomposes as the disjoint union of two open (and 
closed) subschemes: $G^{\times}$ and the identity section $e_G(S)$. This example explains why we have 
chosen to call  $G^{\times}$ the closed subscheme 
of non-null elements in $G$. 

\subsection{Testing non-nullity using an overring $R' \supset R$}  \Preston{Moved this subsection to an earlier location}
An $R$-valued point of $G$ is given by a $k$-algebra homomorphism $g:A \to R$. The element 
$g \in G(R)$ is non-null if and only if the ring homomorphism $g:A \to R$ is $0$ on the 
ideal $J_G$ in $A$. Consequently, if $R \to R'$ is an injective $k$-algebra 
homomorphism, then $g \in G(R)$ is non-null if and only if its image in $G(R')$ is non-null. 

If $R'/R$ is faithfully flat, then $R \to R'$ is injective. So the notion of  non-nullity is 
fpqc local, and therefore continues to make sense for any base scheme (or even algebraic space) $S$. 

\subsection{Non-nullity in the \'etale case} 

Assume that $G/S$ is \'etale. Then, locally in the \'etale topology, $G$ is constant. It follows from the 
calculation in the previous subsection that $A$ is the direct sum of the ideals $J_G$ and $I_G$. In other words,
 $A$ is the cartesian product of the $k$-algebras $A/J_G$ and $A/I_G$. Therefore 
\begin{itemize}
\item
the closed subscheme $G^{\times}$ is also an open subscheme of $G$,  
\item 
 the identity section $e_G : S \hookrightarrow G$ is an open and closed immersion, and
\item 
 $G$ decomposes as the disjoint union 
\[ 
G = G^{\times} \, \coprod \, e_G(S).
\] 
of open subschemes.
\end{itemize}
 So, in the \'etale case, $G^{\times}$ is again the open (and closed) subscheme of $G$ obtained by 
deleting the image of the identity section $S \to G$. 

\subsection{Behavior under base change} \label{sub.BaseChange}

Consider a $k$-algebra $R$. For any scheme $X/k$ we denote by $X_R$ its base change to $R$. 
In particular we may base change $G$ to $R$, obtaining a group scheme $G_R=\Spec(R \otimes A)$ over $R$. 
In our review of Haar measures, we mentioned that the natural map $R \otimes J_G \hookrightarrow J_{G_R}$
is an isomorphism, which tells us that 
 the natural morphism  
\[
(G_R)^{\times} \to (G^{\times})_R
\]
is an isomorphism. In other words, forming $G^{\times}$ from $G$ commutes with extension of scalars. 

%
%
%

\subsection{Non-nullity for Oort-Tate groups}

Now let us examine the notion of non-nullity in the case of Oort-Tate groups \cite{OT}.  
Our notion of non-nullity applies to all 
groups of order $p$ over any base ring $k$, but in order to compare it 
to the notion of Oort-Tate generator we need to restrict attention to $\Lambda$-algebras,
where $\Lambda$ is the base ring considered in \cite{OT}. If $\zeta \in \Z_p$ is a primitive $(p-1)$-rst root of unity, then
\[
\Lambda = \Z\left[\zeta,\frac{1}{p(p-1)}\right] \cap \Z_p
\]
with the intersection taking place in $\Q_p$.


Let $k$ be a $\Lambda$-algebra (e.g., a $\mathbb Z_p$-algebra). 
Then, given suitable $a,b \in k$, Oort-Tate construct 
a group $G_{a,b}$ of order $p$ over $k$, but we will fix $a,b$ 
and just call the group $G$.  The corresponding $k$-algebra  is 
 $A = k[x]/(x^p-ax)$, and its augmentation ideal   is  
generated by $x$. So the ideal $J_G$ consists of all elements in $A$ that are annihilated by $x$, and 
a short computation reveals that $J_G$ is the $k$-submodule of $A$ generated 
by $x^{p-1}-a$. 
This shows that an element $g \in G(k)$ is non-null in our sense if and only if $g$ is a generator of $G$ in the sense of Haines-Rapoport \cite{HR} (this notion of generator was first used by Deligne-Rapoport in \cite[Section V.2.6, pg.~ 106]{Deligne-Rapoport}) \Preston{added a reference to Deligne-Rapoport}. Moreover, as Haines-Rapoport show \cite[Remark 3.3.2]{HR}, this is also equivalent to $g$ having ``exact order $p$'' in the sense of Drinfeld-Katz-Mazur. This agreement suggests that the notion of non-nullity is a natural one. 

\begin{example}
The above discussion applies to the group $\mu_p$ of $p$-th roots of unity. The result is that a section $\zeta \in \mu_p(k)$ lies in $\mu_p^{\times}$ if and only if $\Phi_p(\zeta)=0$ (where $\Phi_p(T) = 1 + T + \dots + T^{p-1}$ is the cyclotomic polynomial). In other words, $\mu_p^\times$ is the subscheme of primitive $p$-th roots of unity. 
\end{example}

\subsection{Non-nullity for Raynaud groups} Raynaud groups are a natural generalization of Oort-Tate groups, and in this case, again, the notion of non-nullity agrees with a well-studied notion. We thank G.~Pappas for communicating this generalization to us.

Let $q=p^n$ be a power of $p$ and let $D$ be the ring defined analogously to $\Lambda$, but with $q$ in place of $p$ (see \cite[Section 1.1]{R}), and let $k$ be a $D$-algebra. Given a suitable $2n$-tuple $(\delta_1,\dots,\delta_n,\gamma_1,\dots,\gamma_n) \in k^{2n}$, Raynaud, in \cite[Collolaire 1.5.1]{R}, defines a group scheme $G$ over $k$ with $|G|=q$ together with an action of $\F_q$ on $G$ -- that is, $G$ is an $\F_q$-vector space scheme of dimension 1. The corresponding $k$-algebra is $A=k[x_i]/(x_i^p-\delta_{i}x_{i+1})$ where $i$ ranges over $\{1,\dots,n\}$ and $x_{n+1}:=x_1$. Then the augmentation ideal is generated by $(x_1,\dots,x_n)$, and using \cite[Proposition 2.1]{lci}, for example, one can see that $J_{G}$ is the $k$-submodule of $A$ generated by $(x_1\cdots x_n)^{p-1} - \delta_1 \dots \delta_n$. By \cite[Proposition 5.1.5]{pappas}, $G^\times$ is the scheme of ``$\F_q$-generators of $G$'', in the sense of Katz-Mazur.

\subsection{Products}

Consider groups $G_1$, $G_2$ over $k$. The corresponding $k$-algebras, augmentation ideals, 
and non-nullity ideals will be denoted $A_i$, $I_i$, $J_i$ (for $i = 1,2$).  
The ring of regular functions for the group $G=G_1 \times G_2$ is 
 $A = A_1 \otimes A_2$, and its augmentation ideal $I_G$  is 
$(I_1 \otimes A_2) + (A_1 \otimes I_2)$. Therefore the ideal $J_G$ in $A$ annihilated by $I$ is the 
intersection of the ideal annihilated by $I_1$, namely $J_1 \otimes A_2$, and the one annihilated 
by $I_2$, namely $A_1 \otimes J_2$. (It follows that
$
J_G= J_1 \otimes J_2  ,
$
and so $J_G$ is also the product of the ideals  $J_1 \otimes A_2$ and $A_1 \otimes J_2$.)

In more geometrical language, we just verified that $G^\times$ is the ``union'' of the closed subschemes 
$G_1 \times G_2^\times$ and $G_1^\times \times G_2$ of $G$ (i.e.~it is the smallest closed 
subscheme containing the two given closed subschemes). 

Some care is required in this situation. Consider an $R$-valued point $g=(g_1,g_2)$ of $G$. 
 If $g_1$ is non-null or $g_2$  is non-null, then $g$ is non-null. However, the converse is false, 
as is illustrated by the next example (when considering points with values in a ring that is not an integral domain).

\begin{example}
 Let $(x,y) \in \mu_p \times \mu_p$. Then $(x,y)$ is non-null if and only if $\Phi_p(x)\Phi_p(y)$ vanishes. 
So, for this group, non-nullity coincides with the notion of primitivity introduced  in \cite{W}. 
\end{example}

\subsection{Extensions}

Consider a short exact sequence 
\[
0 \to H \xrightarrow{i} G \xrightarrow{\pi} K \to 0
\]
as in Section \ref{subsec: integration}. We use the same system of notation: to $G$, $H$, $K$ correspond  $k$-algebras $A$, $B$, $C$ respectively. Their augmentation ideals 
will be denoted $I_A$, $I_B$, $I_C$, and their non-nullity ideals will be denoted $J_A$, $J_B$, $J_C$. 
Applying   Lemma \ref{lem.IntStages} to the Cartier dual of $G$, we see that  $J_B \otimes J_C \simeq J_A$, just as 
in the special case when $G=H \times K$.   In fact  Lemma \ref{lem.IntStages} says more. It tells us that 
\begin{equation}\label{eq.JABC}
J_A=(i^*)^{-1}(J_B) J_C, 
\end{equation}
 where $i^*$  denotes the  surjection $A \twoheadrightarrow B=A/I_C A$ 
obtained from $i: H \hookrightarrow G$. 
From this we obtain the following lemma.  
\begin{lemma}
The closed subscheme $G^{\times}$ of $G$ contains both of the following closed subschemes of 
$G$: 
\begin{itemize}
\item
the closed subscheme $H^{\times} \hookrightarrow H \hookrightarrow G$,
\item 
the closed subscheme $\pi^{-1}(K^\times)$ of $G$ obtained from $K^{\times} \hookrightarrow K$ by base change along 
$\pi : G \twoheadrightarrow K$.
\end{itemize}
The first item tells us that there exists an  arrow $j$ making 
\begin{equation}\label{CD.HGprim}
\begin{CD}
H^{\times} @>j>> G^{\times} \\
@VVV @VVV \\
H @>>> G
\end{CD}
\end{equation} 
commute. This arrow is  unique, and it is a closed immersion. 
If $K$ is \'etale over $S$, then the first item can be strengthened to the statement that the square 
\eqref{CD.HGprim} 
is cartesian. 
\end{lemma}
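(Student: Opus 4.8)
The plan is to work entirely on the level of the $k$-algebras $A$, $B$, $C$ and to extract everything from the identity \eqref{eq.JABC}, namely $J_A=(i^*)^{-1}(J_B)\,J_C$, where I read $J_C$ as the ideal $J_C A$ of $A$ generated by $\pi^*(J_C)$. The two containments come out directly. The inclusion $\pi^{-1}(K^\times)\subseteq G^\times$ is the ideal-theoretic statement $J_A\subseteq J_C A$, which is immediate from \eqref{eq.JABC} since every generator of the product ideal is a multiple of an element of $J_C A$. For the first item, the closed immersion $H^\times\hookrightarrow H\hookrightarrow G$ corresponds to the composite $A\xrightarrow{i^*}B\twoheadrightarrow B/J_B$, and it factors through $G^\times=\Spec(A/J_A)$ precisely when $i^*(J_A)\subseteq J_B$.

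I would verify this by applying the surjection $i^*$ to \eqref{eq.JABC}: since $i^*$ is surjective one has $i^*\bigl((i^*)^{-1}(J_B)\bigr)=J_B$, so $i^*(J_A)=J_B\cdot i^*(J_C A)\subseteq J_B$. (The only fact needed about $i^*\circ\pi^*$ is that it lands in $B\cdot 1$, which holds because $\pi\circ i$ is the trivial homomorphism $H\to K$, whence $i^*\circ\pi^*=\varepsilon_C(\cdot)\,1_B$ for the counit $\varepsilon_C\colon C\to k$.) This produces the arrow $j$; it is a closed immersion because the induced map $A/J_A\to B/J_B$ comes from the surjection $i^*$ and is therefore itself surjective. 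Uniqueness of $j$ is formal: $G^\times\hookrightarrow G$ is a monomorphism, so an arrow into $G^\times$ is pinned down by its composite into $G$, which the commuting square prescribes.

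It remains to treat the cartesian claim when $K/S$ is \'etale. I would translate it into the statement that the pullback ring $B\otimes_A(A/J_A)=A/(I_C A+J_A)$ agrees with $B/J_B=A/(i^*)^{-1}(J_B)$, i.e.\ that
\[
(i^*)^{-1}(J_B)=I_C A+J_A .
\]
The inclusion $\supseteq$ is already in hand, since $I_C A=\ker(i^*)\subseteq(i^*)^{-1}(J_B)$ and $J_A\subseteq(i^*)^{-1}(J_B)$ by the computation above. For $\subseteq$ I would invoke the \'etale case, where $C$ splits as the product of $k$-algebras $C/I_C$ and $C/J_C$; equivalently there is an idempotent $\varepsilon\in C$ with $J_C=\varepsilon C$ and $I_C=(1-\varepsilon)C$. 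Writing $\varepsilon$ also for its image in $A$, any $x\in(i^*)^{-1}(J_B)$ decomposes as $x=\varepsilon x+(1-\varepsilon)x$, with $(1-\varepsilon)x\in I_C A$ and $\varepsilon x\in(i^*)^{-1}(J_B)\cdot J_C A=J_A$ by \eqref{eq.JABC}. This supplies the missing inclusion and hence the cartesian square.

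The main obstacle is precisely this last inclusion: the reverse containment in the cartesian claim is the only step that genuinely uses the hypothesis, and it fails without it. \'Etaleness of $K$ is exactly what provides the idempotent splitting $C=J_C\oplus I_C$ that lets one separate the ``$J_C$-part'' of an element (absorbed into $J_A$ via \eqref{eq.JABC}) from its ``$I_C$-part'' (absorbed into $\ker i^*=I_C A$). Everything else is formal manipulation of the product formula \eqref{eq.JABC} together with the surjectivity of $i^*$ and the triviality of $\pi\circ i$.
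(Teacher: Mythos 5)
Your proof is correct and follows essentially the same route as the paper's: both containments and the cartesian claim are extracted from the product formula \eqref{eq.JABC}, with \'etaleness entering only through the splitting $C=J_C\oplus I_C$, i.e.\ an element of $J_C$ mapping to $1$ under $i^*$. The only cosmetic difference is that you phrase the cartesian condition as $(i^*)^{-1}(J_B)=I_CA+J_A$ and argue via the idempotent, whereas the paper phrases it as $i^*(J_A)=J_B$ and argues that $i^*(J_C)$ is the unit ideal; these are the same statement.
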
 

\begin{proof}
We begin with the first item. The first item is true if and only if there exists an arrow $j$ making 
the square \eqref{CD.HGprim} commute. This is the condition that $i^*(J_A) \subset J_B$. 
That this condition holds follows from \eqref{eq.JABC}, which shows that $i^*(J_A)$ is the product 
of the ideals $J_B$ and $(i^*)(J_C)$ in $B$. 

The first item can be strengthened to the statement that the square \eqref{CD.HGprim} is cartesian if and 
only if the inclusion  $i^*(J_A) \subset J_B$ is an equality. This is certainly the case when $i^*(J_C)$ 
is the unit ideal in $B$.

When $K$ is 
\'etale over $S$, we have seen that $C = J_C \oplus I_C$. Therefore there exists $f \in J_C$ such 
that $1-f \in I_C$. The image of $f$ under $i^*$ is equal to $1$, showing that  $i^*(J_C)$ is indeed 
the unit ideal in $B$. 

Finally, the second item is true if and only if the ideal $AJ_C$  contains the ideal $J_A$. The truth of this is obvious from \eqref{eq.JABC}. 
\end{proof}

\begin{remark} Let $h \in H(R)$. 
The lemma implies that, if $h$ is non-null for $H$, then it is non-null for $G$.  It also implies that the converse 
is true provided that $K$ is \'etale over $S$. In general the converse is false. For example,  $(1,y) 
\in \mu_p(R) \times \mu_p(R)$ is non-null if and only if $p\Phi_p(y)=0$, while $y \in \mu_p(R)$ is non-null  
if and only if $\Phi_p(y)=0$. These are equivalent conditions when $p$ is invertible in $R$, but not in 
general. 
\end{remark}

\section{Primitivity of points in truncated $p$-divisible groups}

In this section we fix a prime number $p$. 

\subsection{Definition of primitivity} \label{sub.Precise} 

Now we consider a $p$-divisible group $\mathcal{G}$ of height $h$ over any base scheme $S$. For any positive 
integer $i$ we are interested in the $p^i$-torsion $\mathcal{G}[p^i]$ in $\mathcal{G}$, but henceforth we  abbreviate $\mathcal{G}[p^i]$ to $\mathcal{G}_i$. 
 For any pair $i,j$ of positive integers 
there is then a short exact (in the fppf sense) sequence 
\begin{equation}
0 \to \mathcal{G}_i \hookrightarrow \mathcal{G}_{i+j} \twoheadrightarrow \mathcal{G}_j \to 0. 
\end{equation}
The arrow $\mathcal{G}_{i+j}\twoheadrightarrow \mathcal{G}_j$ (strictly speaking, its composition with $\mathcal{G}_j \hookrightarrow \mathcal{G}_{i+j}$) 
is given by raising to the power $p^i$, and it is finite locally free of rank $p^{hi}$.

Let $R$ be a $k$-algebra, 
let $x$ be an $R$-valued point of $\mathcal{G}_i$, and write $\bar x$ for the image of $x$ under the 
canonical homomorphism $\mathcal{G}_i \twoheadrightarrow \mathcal{G}_1$ (raising to the power $p^{i-1}$). 
We say that $x$ is \emph{primitive} if $\bar x$ is non-null in $\mathcal{G}_1(R)$.  

In other words, if we define $\mathcal{G}_i^{\prim}$ as the fiber product making 
\begin{equation}
\begin{CD}
\mathcal{G}_i^{\prim} @>>> \mathcal{G}_i \\
@VVV @VVV \\
\mathcal{G}_1^{\times} @>>> \mathcal{G}_1
\end{CD}
\end{equation}
cartesian, then $x \in \mathcal{G}_i(R)$ is primitive if and only if it lies in the image of the $R$-points 
of  $\mathcal{G}_i^{\prim}$. Because the square is cartesian, we see that 
\begin{itemize}
\item
$\mathcal{G}_i^{\prim} \to \mathcal{G}_i$ is a closed immersion, and 
\item 
$\mathcal{G}_i^{\prim} \to \mathcal{G}_1^{\times}$ is finite, locally free of rank $p^{h(i-1)}$. 
\end{itemize}
Now $\mathcal{G}_1^{\times}$ is finite, locally free of rank $p^h-1$ over $S$, so we conclude that
$\mathcal{G}_i^{\prim}$ is finite, locally free of rank $(p^h-1)p^{h(i-1)}$ over $S$.

\subsection{Comparison with points of exact order $N$ on elliptic curves} 

In this subsection we fix $i$ and put  $N = p^i$. 
Consider an elliptic curve $E$ over $S$.
 Let $E_N$ denote its $N$-torsion points. 
 Then consider the following  two closed subschemes of $E_N$, namely 
\begin{itemize}
\item
the closed subscheme $E_N^{\prim}$ defined above, and 
\item  the closed subscheme, call it 
$E_N^\sharp$, of points of 
exact order $N$ in the sense of Drinfeld and Katz-Mazur (see \cite{D,KM}). 
\end{itemize}
We claim that 
\begin{itemize}
\item[($\star$)]
$E_N^{\prim}$ coincides with $E_N^\sharp$.
\end{itemize}

We need to prove that ($\star$) holds for every elliptic curve $E/S$. We cannot see a priori a natural morphism between these two objects, so we proceed in the same way that similar 
problems are treated in \cite{KM}. \Preston{Added a remark that we don't know of a natural morphism between the two objects}

{\bf Step 1}  It is evident that ($\star$) holds when $p$ is invertible on $S$, because    
 $E_N/S$ is then \'etale.

{\bf Step 2} Next we check that  ($\star$) holds for $E/S$ whenever $S$ is flat over $\mathbb Z$. 
In this situation $E_N$, $E_N^{\prim}$, and $E_N^\sharp$ are flat over $\mathbb Z$. 
Here we used that $E_N/S$ is flat (standard), that $E_N^{\prim}/S$ is flat (see subsection \ref{sub.Precise}), and that 
$E_N^\sharp/S$ is flat (see \cite[Thm.~5.1.1]{KM}).
By Step 1 the closed subschemes $E_N^{\prim}$ and $E_N^\sharp$ of $E_N$ coincide 
over the locus in $S$ where $p$ is invertible, so the flatness of  $E_N$, $E_N^{\prim}$ and $E_N^\sharp$ 
 over $\mathbb Z$ forces 
 $E_N^{\prim}$ to coincide with $E_N^{\sharp}$ over all of $S$. 

{\bf Step 3} Let $\mathcal E$ denote the moduli stack (over $\mathbb Z$) of elliptic curves, and choose a presentation 
(see \cite{LM}) 
$f : \mathcal M \twoheadrightarrow \mathcal E$ for it. Here $f$ is \'etale and surjective, and $\mathcal M$ 
is a smooth scheme of finite type over $\mathbb Z$. 
Pulling back the universal elliptic curve on $\mathcal E$, we 
obtain an elliptic curve $\mathbf E$ on the scheme $\mathcal M$. 
In the the terminology of \cite{KM}, $\mathbf E/\mathcal M$ is a ``modular family.'' 

Now consider an elliptic curve $E$ over an arbitrary base scheme $S$. We consider the product $\mathcal M \times S$ and 
write $p_1$, $p_2$ for the two projections. We then have two elliptic curves over $\mathcal M \times S$, 
namely $p_1^*\mathbf E$ and $p_2^*E$, and we form the $\mathcal M \times S$-scheme $T$ of isomorphisms 
between $p_1^*\mathbf E$ and $p_2^*E$. Over $T$ the elliptic curves $\mathbf E$ and $E$ become tautologically isomorphic; the resulting elliptic curve on $T$ will be denoted $\tilde E$. 

At this point we have a commutative diagram 
\begin{equation}
\begin{CD}
\mathbf E @<<< \tilde{E} @>>> E \\
@VVV @VVV @VVV \\
\mathcal M @<<< T @>>> S
\end{CD}
\end{equation}
in which both squares are cartesian. 
The two arrows in the bottom row exhibit $T$ as the fiber product of $\mathcal M$ and $S$ over $\mathcal E$, 
so $T \to S$ is \'etale and surjective. 

Now $\mathcal M$ is flat over $\mathbb Z$, so ($\star$) holds for $\mathbf E/\mathcal M$. It follows that
($\star$) holds for $\tilde{E}/T$. Here we used that the operations of forming $E_N^{\prim}$ and $E_N^\sharp$ both 
commute with base change (use subsection \ref{sub.BaseChange} and the first chapter of \cite{KM}, especially their 
Corollary 1.3.7). So $E_N^{\prim}$ and  $E_N^\sharp$ become equal after the \'etale surjective base change
$T \to S$. By descent theory $E_N^{\prim}$ and $E_N^\sharp$ are themselves equal.

\bibliographystyle{amsalpha}

\bibliography{newbib}

\def\cprime{$'$}
\providecommand{\bysame}{\leavevmode\hbox to3em{\hrulefill}\thinspace}
\providecommand{\MR}{\relax\ifhmode\unskip\space\fi MR }
\providecommand{\MRhref}[2]{%
  \href{http://www.ams.org/mathscinet-getitem?mr=#1}{#2}
}
\providecommand{\href}[2]{#2}
\begin{thebibliography}{dSRS97}

\bibitem[CN90]{CN}
Ching-Li Chai and Peter Norman, \emph{Bad reduction of the {S}iegel moduli
  scheme of genus two with {$\Gamma_0(p)$}-level structure}, Amer. J. Math.
  \textbf{112} (1990), no.~6, 1003--1071. \MR{1081813 (91i:14033)}

\bibitem[DR73]{Deligne-Rapoport}
P.~Deligne and M.~Rapoport, \emph{Les sch\'emas de modules de courbes
  elliptiques}, 143--316. Lecture Notes in Math., Vol. 349. \MR{0337993}

\bibitem[Dri74]{D}
V.~G. Drinfel{\cprime}d, \emph{Elliptic modules}, Mat. Sb. (N.S.)
  \textbf{94(136)} (1974), 594--627, 656. \MR{0384707 (52 \#5580)}

\bibitem[dSRS97]{lci}
Bart de~Smit, Karl Rubin, and Ren{\'e} Schoof, \emph{Criteria for complete
  intersections}, Modular forms and {F}ermat's last theorem ({B}oston, {MA},
  1995), Springer, New York, 1997, pp.~343--356. \MR{1638484}

\bibitem[HR12]{HR}
Thomas~J. Haines and Michael Rapoport, \emph{Shimura varieties with {$\Gamma\sb
  1(p)$}-level via {H}ecke algebra isomorphisms: the {D}rinfeld case}, Ann.
  Sci. \'Ec. Norm. Sup\'er. (4) \textbf{45} (2012), no.~5, 719--785 (2013).
  \MR{3053008}

\bibitem[KM85]{KM}
Nicholas~M. Katz and Barry Mazur, \emph{Arithmetic moduli of elliptic curves},
  Annals of Mathematics Studies, vol. 108, Princeton University Press,
  Princeton, NJ, 1985. \MR{772569 (86i:11024)}

\bibitem[LMB00]{LM}
G{\'e}rard Laumon and Laurent Moret-Bailly, \emph{Champs alg\'ebriques},
  Ergebnisse der Mathematik und ihrer Grenzgebiete. 3. Folge. A Series of
  Modern Surveys in Mathematics [Results in Mathematics and Related Areas. 3rd
  Series. A Series of Modern Surveys in Mathematics], vol.~39, Springer-Verlag,
  Berlin, 2000. \MR{1771927 (2001f:14006)}

\bibitem[MB85]{MB}
Laurent Moret-Bailly, \emph{Pinceaux de vari\'et\'es ab\'eliennes},
  Ast\'erisque (1985), no.~129, 266. \MR{797982 (87j:14069)}

\bibitem[Pap95]{pappas}
Georgios Pappas, \emph{Arithmetic models for {H}ilbert modular varieties},
  Compositio Math. \textbf{98} (1995), no.~1, 43--76. \MR{1353285}

\bibitem[Ray74]{R}
Michel Raynaud, \emph{Sch\'emas en groupes de type {$(p,\dots, p)$}}, Bull.
  Soc. Math. France \textbf{102} (1974), 241--280. \MR{0419467 (54 \#7488)}

\bibitem[TO70]{OT}
John Tate and Frans Oort, \emph{Group schemes of prime order}, Ann. Sci.
  \'Ecole Norm. Sup. (4) \textbf{3} (1970), 1--21. \MR{0265368 (42 \#278)}

\bibitem[Wak16]{W}
Preston Wake, \emph{Full level structures revisited: pairs of roots of unity},
  J. Number Theory \textbf{168} (2016), 81--100. \MR{3515808}

\end{thebibliography}

\end{document}